\begin{document}

\renewcommand{\thesubsection}{\arabic{subsection}}

\newcommand{\nc}{\newcommand}

\nc{\g}{\mathfrak g}
\nc{\n}{\mathfrak n} \nc{\opn}{\overline{\n}}
\nc{\h}{\mathfrak h}
\nc{\ba}{\mathfrak b}
\nc{\Ug}{U(\g)} \nc{\Uh}{U(\h)} \nc{\Un}{U(\n)}
\nc{\Uopn}{U(\opn)}\nc{\Ub}{U(\ba)} \nc{\p}{\mathfrak p}
\nc{\z}{\mathfrak z}
\nc{\m}{\mathfrak m}
\nc{\ka}{\mathfrak k}
\nc{\opk}{\overline{\ka}}
\nc{\opb}{\overline{\ba}}
\nc{\e}{{\epsilon}}
\nc{\ke}{{\bf k}_\e}
\nc{\Hk}{{\rm Hk}^{\gr}(A,A_0,\e )}
\nc{\gr}{\bullet}
\nc{\ra}{\rightarrow}
\nc{\Alm}{A-{\rm mod}}
\nc{\DAl}{{D}^-(A)}
\nc{\HA}{{\rm Hom}_A}

\newtheorem{theorem}{Theorem}{}
\newtheorem{lemma}[theorem]{Lemma}{}
\newtheorem{corollary}[theorem]{Corollary}{}
\newtheorem{conjecture}[theorem]{Conjecture}{}
\newtheorem{proposition}[theorem]{Proposition}{}
\newtheorem{axiom}{Axiom}{}
\newtheorem{remark}{Remark}{}
\newtheorem{example}{Example}{}
\newtheorem{exercise}{Exercise}{}
\newtheorem{definition}{Definition}{}

\title{The geometric meaning of Zhelobenko operators}

\author{A. Sevostyanov}

\address{ Institute of Mathematics,
University of Aberdeen \\ Aberdeen AB24 3UE, United Kingdom \\e-mail: seva@maths.abdn.ac.uk}

\begin{abstract}
Let $\g$ be the complex semisimple Lie algebra associated to a complex semisimple algebraic group $G$, $\ba$ a Borel subalgebra of $\g$, $\h\subset \ba$ the Cartan sublagebra and $N\subset G$ the unipotent subgroup corresponding to the nilradical $\n\subset \ba$.
We show that the explicit formula for the extremal projection operator for $\g$ obtained by Asherova, Smirnov and Tolstoy and similar formulas for Zhelobenko operators are related to the existence of a birational equivalence $N\times \h \rightarrow \ba$ given by the restriction of the adjoint action. Simple geometric proofs of  formulas for the ``classical'' counterparts of the extremal projection operator and of Zhelobenko operators are also obtained.
\end{abstract}

\keywords{Lie algebra, extremal projection operator, Zhelobenko operators}

\maketitle

\section*{Introduction}

Let $\g$ be a complex finite--dimensional semisimple Lie algebra, $\n\subset \g$ its maximal nilpotent subalgebra.
Extremal projection operators are projection operators onto the subspaces of $\n$--invariants in certain $\g$--modules the action of $\n$ on which is locally nilpotent. The first example of such operators for $\g=\mathfrak{sl}_2$ was explicitly constructed in \cite{L}. In papers \cite{AST1,AST2,AST3} the results of \cite{L} were generalized to the case of arbitrary complex semisimple Lie algebras, and explicit formulas for extremal projection operators were obtained. Later, using a certain completion of an extension of the universal enveloping algebra of $\g$, Zhelobenko observed in \cite{Z1} that the existence of extremal projection operators is an almost trivial fact. In \cite{Z1} he also introduced a family of operators which are analogues to extremal projection operators. These operators are called now Zhelobenko operators.

Extremal projection operators and Zhelobenko operators appear in various contexts. Besides the theory of $\g$--modules one should mention the theory of crystal bases where similar operators are known under the name of Kashiwara operators (see \cite{K1,K2}), Mickelsson algebras (see \cite{Z1}) and dynamical Weyl group elements which are examples of Zhelobenko operators (see \cite{EV}). An extended review of applications of extremal projection operators and of Zhelobenko operators can be found in \cite{T}, and book \cite{Z2} contains a detailed exposition of their applications. A more recent application to the structure theory of complex semisimple Lie algebras can be found in \cite{KNV}.

Despite of the fact that the explicit formula for extremal projection operators was obtained in  \cite{AST1} more that forty years ago and Zhelobenko operators are also defined by explicit formulas it seems that the meaning of those formulas is still not clear. Actually the proof of the explicit formula for extremal projection operators given in \cite{AST3} is reduced to case by case analysis for Lie algebras of rank 2, and the remarkable properties of Zhelobenko operators are derived in \cite{Z3} by applying a series of complicated technical arguments which are not transparent.

The aim of this short note is to fill in this gap. Following Zhelobenko, we consider the extremal projection operator for a universal Verma module and its classical analogue, the space of functions on the Borel subalgebra $\ba$ containing $\n$. The point is that the natural classical analogue of the extremal projection operator is a projection operator onto the subspace of $\n$--invariant functions on $\ba$, where the action is induced by the action of $\n$ on $\ba$ by commutators. A formula for that operator, which is similar to the formula derived in \cite{AST3}, can be easily obtained by elementary geometric methods. Zhelobenko operators can be treated in a similar way. Moreover, classical analogues of extremal projection operators are particular examples of the general construction presented in the next section.

\section{Projection operators associated to free group actions on manifolds}

Let $G$ be a Lie group (algebraic group), and $M$ a manifold (algebraic variety) equipped with a Lie group (regular) action of $G$. Assume that there exists a cross--section $X\subset M$ for this action, and $M=G\times X$ as a $G$--manifold (variety), so that the map
\begin{equation}\label{iso}
G\times X \rightarrow M,~(g,x)\mapsto gx,~g\in G,x\in X
\end{equation}
is an isomorphism (an isomorphism of manifolds, an isomorphism of varieties or a birational equivalence).

Denote by $F(M)$ the space of smooth (regular or rational) functions on $M$ depending on the context. The group $G$ naturally acts on $F(M)$. Let $F(M)^G$ be the subspace of $G$--invariant elements of $F(M)$.
Define the projection operator $\overline{P}:F(M)\rightarrow F(M)^G$ as follows
\begin{equation}\label{P}
(\overline{P}f)(gx)=f(x)=f(g^{-1}gx)=(gf)(gx),~g\in G,x\in X.
\end{equation}
If for $y\in M$ we denote by $g(y)\in G$ the unique element such that $y=g(y)x(y)$ for a unique $x(y)\in X$ then
\begin{equation}\label{P1}
(\overline{P}f)(y)=(g(y)f)(y).
\end{equation}
We call the operator $\overline{P}$ the projection operator corresponding to isomorphism (\ref{iso}).

If every element of $G$ can be uniquely represented as a product of elements from subgroups $G_1, \ldots, G_N$, i.e. $G=G_1\cdot \ldots \cdot G_N$, then the operator $P$ can be expressed as a composition of operators $\overline{P}_i$,
\begin{equation}\label{Pi}
(\overline{P}_if)(y)=(g_i(y)f)(y),~g(y)=g_1(y)\ldots g_N(y),~g_i(y)\in G_i,
\end{equation}
\begin{equation}\label{PC}
(\overline{P}f)(y)=(\overline{P}_1\ldots \overline{P}_Nf)(y).
\end{equation}

\section{Quasi--classical versions of extremal projection operators}

Now let $\g$ be the complex semisimple Lie algebra associated to a complex semisimple algebraic group $G$, $\ba$ a Borel subalgebra of $\g$, $\h\subset \ba$ the Cartan sublagebra and $N\subset G$ the unipotent subgroup corresponding to the nilradical $\n\subset \ba$. Let $\Delta$ be the set of roots associated to the pair $(\g,\h)$, $\Delta_+$ the set of positive roots associated to the pair $(\g,\ba)$, and $\h'=\{h\in \h:\alpha(h)\neq 0~\forall \alpha \in \Delta_+\}$ the regular part of $\h$. Denote by $\ba'\subset \ba$ the open (in the Zariski topology) subset $\ba'=\h'+\n$.

Consider the action of $N$ on $\ba$ induced by the adjoint action. Obviously this action induces an action of $N$ on $\ba'$. It turns out that the multiplicative formula for extremal projection operators obtained in \cite{AST1,AST2,AST3} is related to the property of the action map $N\times \h' \rightarrow \ba'$ formulated in the following lemma.

\begin{lemma}
The action map
\begin{equation}\label{iso1}
N\times \h' \rightarrow \ba'
\end{equation}
induced by the adjoint action is an isomorphism of manifolds, and hence this map gives rise to a birational equivalence
\begin{equation}\label{isob}
N\times \h \rightarrow \ba.
\end{equation}
\end{lemma}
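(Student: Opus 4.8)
The plan is to show that the action map $\Phi\colon(n,h)\mapsto \mathrm{Ad}(n)h$ is a bijective morphism with regular inverse, and then to observe that it is an isomorphism between dense open subsets of $N\times\h$ and $\ba$. First I would check that $\Phi$ is well defined into $\ba'$: using the isomorphism $\exp\colon\n\to N$ and $X\in\n$, $h\in\h$, one has $\mathrm{Ad}(\exp X)h=e^{\mathrm{ad}X}h=h+[X,h]+\tfrac12[X,[X,h]]+\cdots$, and since $[\n,\h]\subseteq\n$ every correction term lies in $\n$; hence $\mathrm{Ad}(n)h\in h+\n\subseteq\h'+\n=\ba'$ whenever $h\in\h'$, and the $\h$-component of $\Phi(n,h)$ (the projection along $\n$) equals $h$. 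Injectivity follows at once: if $\mathrm{Ad}(n_1)h_1=\mathrm{Ad}(n_2)h_2$, comparing $\h$-components gives $h_1=h_2=:h$, and then $n_2^{-1}n_1$ centralises $h$; but the centraliser of $h$ in $\n$ is $\{X\in\n:\mathrm{ad}(h)X=0\}=0$ because $\mathrm{ad}(h)$ is invertible on $\n$, so the subgroup $Z_N(h)$ is trivial and $n_1=n_2$.

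The heart of the argument — and the step I expect to be the main obstacle — is surjectivity together with regularity of the inverse. I would fix $h\in\h'$ and study the orbit map $\psi_h\colon N\to h+\n$; composing with $\exp$ turns it into the polynomial map $F_h\colon\n\to\n$, $F_h(X)=\mathrm{Ad}(\exp X)h-h$, whose linear part is $X\mapsto[X,h]=-\mathrm{ad}(h)X$, invertible on $\n$ since $\mathrm{ad}(h)$ acts on $\g_\alpha$ by the nonzero scalar $\alpha(h)$ for $\alpha\in\Delta_+$. To upgrade ``invertible differential'' to ``isomorphism'' I would use the grading $\n=\bigoplus_{i\ge1}\n_i$ by root height. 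Because $[\n_i,\n_j]\subseteq\n_{i+j}$ and $[\n_i,\h]\subseteq\n_i$, the degree-$k$ component of $F_h(X)$ equals $-\mathrm{ad}(h)X_k$ plus a polynomial in the lower components $X_1,\dots,X_{k-1}$. This triangular structure lets me solve $F_h(X)=Y$ recursively, each step merely inverting $\mathrm{ad}(h)$ on a single $\n_k$; hence $F_h$, and therefore $\psi_h$, is an isomorphism $N\xrightarrow{\sim}h+\n$.

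Finally I would assemble the global inverse $\ba'\to N\times\h'$ sending $y$ to $(\psi_{h(y)}^{-1}(y),h(y))$, where $h(y)$ is the $\h$-component of $y$. The recursion above expresses this inverse through repeated application of $\mathrm{ad}(h)^{-1}$, that is, through the scalars $\alpha(h)^{-1}$, which are regular on $\h'$ precisely because $\alpha(h)\neq0$ there; so the inverse is a morphism and $\Phi$ is an isomorphism realising (\ref{iso1}). Since $\ba'=\h'+\n$ is open and dense in $\ba$ (its complement being the union of the hyperplanes $\alpha=0$) and $N\times\h'$ is open and dense in $N\times\h$, this isomorphism of dense open subsets is exactly a birational equivalence (\ref{isob}), completing the proof.
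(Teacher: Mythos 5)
Your proof is correct, but it takes a genuinely different route from the paper's. The paper fixes a normal ordering $\beta_1,\ldots,\beta_N$ of $\Delta_+$ and strips the root components off $y\in\ba'$ one root at a time, conjugating by $e^{c_{\beta_i}e_{\beta_i}/\beta_i(h)}$ from $\beta_N$ downwards; the key combinatorial input is the normal-ordering property that $\beta_N+\alpha$, if a root, lies among $\beta_1,\ldots,\beta_{N-1}$, which makes each conjugation kill one coefficient without reintroducing later ones. You instead work with a single exponential, inverting $F_h(X)={\rm Ad}(\exp X)h-h$ degree by degree in the height grading $\n=\bigoplus_{i\geq 1}\n_i$, and you add a direct injectivity argument via the triviality of the centralizer $Z_N(h)$ for regular $h$ (the paper gets uniqueness implicitly from the uniqueness of the coefficients in its recursion (\ref{ind})). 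Both triangular recursions are sound, and your version is arguably more self-contained — it needs no normal orderings, and it makes the regularity of the inverse on $\ba'$ (denominators only of the form $\alpha(h)$) fully explicit, which is the correct justification for passing from the isomorphism (\ref{iso1}) to the birational equivalence (\ref{isob}). What you lose is precisely what the paper needs later: its proof produces the multiplicative factorization (\ref{isof}) of the group element $g(y)$ into one-parameter root-subgroup factors ordered by the normal ordering, and this factorization is the whole point, since it underlies the product formula $P=P_{\beta_N}\ldots P_{\beta_1}$ of (\ref{pfact}) and the definition of the classical Zhelobenko operators $Q_w$; your single-exponential inverse does not directly yield it. One small gloss worth a sentence in your write-up: deducing $Z_N(h)=\{e\}$ from $\{X\in\n:[X,h]=0\}=0$ uses that $N$ is unipotent (so $\exp:\n\to N$ is an isomorphism, or equivalently $Z_N(h)$ is finite hence trivial since unipotent groups over $\mathbb{C}$ are torsion-free); alternatively your own lowest-height argument applied to $e^{{\rm ad}\,X}h=h$ gives it at once.
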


\begin{proof}
Indeed, fix a normal ordering $\beta_1, \ldots ,\beta_N$ of $\Delta_+$, i.e. an ordering of $\Delta_+$ in which all simple roots are placed in an arbitrary way and if $\gamma=\alpha+\beta$, $\alpha, \beta, \gamma \in \Delta_+$ then $\gamma$ is situated between $\alpha$ and $\beta$. Let $e_\alpha$, $\alpha \in \Delta$ be root vectors in $\g$ and $h_\alpha\in \h$ are coroots normalized in such a way that $[h_\alpha,e_{\pm\alpha}]=\pm 2e_\alpha$, $[e_\alpha,e_{-\alpha}]=h_\alpha$, $\alpha \in \Delta_+$.

Every element $y\in \ba'$ can be uniquely written in the form
\begin{equation}\label{y}
y=h+\sum_{\alpha\in \Delta_+}c_\alpha e_\alpha,~h\in \h',c_\alpha \in \mathbb{C}.
\end{equation}
Now recall that $[e_\alpha,e_\beta]\in \g_{\alpha+\beta}$, where $\g_{\alpha+\beta}\subset \g$ is the root subspace corresponding to the root $\alpha+\beta$.  From the definition of the normal ordering of $\Delta_+$ it also follows that if for $\alpha\in \{\beta_1, \ldots, \beta_N\}$ we have $\beta_N+\alpha\in \Delta$ then $\beta_N+\alpha\in \{\beta_1, \ldots, \beta_{N-1}\}$. Therefore using the commutation relation $\frac{c_{\beta_N}}{\beta_N(h)}[e_{\beta_N},h]=-c_{\beta_N}e_{\beta_N}$ and the formula ${\rm Ad} e^x=e^{{\rm ad}~ x}$, where for $x\in \g$ $e^x$ stands for the exponential mapping $\exp:\g\rightarrow G$ applied to $x$, one obtains
$$
y^1={\rm Ad} e^{\frac{c_{\beta_N}}{\beta_N(h)}e_{\beta_N}}y=h+\sum_{\alpha<\beta_N}c_\alpha^1 e_\alpha,~h\in \h',
$$
where the coefficients $c_\alpha^1 \in \mathbb{C}$, $\alpha<\beta_N$ are uniquely defined for given $y$.

We can proceed in the same way
to obtain
$$
y^2={\rm Ad} e^{\frac{c_{\beta_{N-1}}^1}{\beta_{N-1}(h)}e_{\beta_{N-1}}}y^1=h+\sum_{\alpha<\beta_{N-1}}c_\alpha^2 e_\alpha,~h\in \h',c_\alpha^2 \in \mathbb{C}.
$$

Now a simple induction gives the following representation for $y\in \ba'$,
$$
{\rm Ad} e^{\frac{c_{\beta_{1}}^{N-1}}{\beta_{1}(h)}e_{\beta_1}}\ldots{\rm Ad} e^{\frac{c_{\beta_{N-1}}^1}{\beta_{N-1}(h)}e_{\beta_{N-1}}}{\rm Ad} e^{\frac{c_{\beta_N}}{\beta_N(h)}e_{\beta_N}}y=h,
$$
or
\begin{equation}\label{isof}
{\rm Ad} e^{-\frac{c_{\beta_N}}{\beta_N(h)}e_{\beta_N}}{\rm Ad}  e^{-\frac{c_{\beta_{N-1}}^1}{\beta_{N-1}(h)}e_{\beta_{N-1}}}\ldots {\rm Ad}e^{-\frac{c_{\beta_{1}}^{N-1}}{\beta_{1}(h)}e_{\beta_1}}h=y,
\end{equation}
where the coefficients $c_{\beta_{i}}^{N-i}$ are uniquely defined by induction from the relations
\begin{equation}\label{ind}
{\rm Ad} e^{\frac{c_{\beta_{N-i+1}}^{i-1}}{\beta_{N-i+1}(h)}e_{\beta_{N-i+1}}}\ldots {\rm Ad} e^{\frac{c_{\beta_N}}{\beta_N(h)}e_{\beta_N}}y=h+\sum_{\alpha<\beta_{N-i+1}}c_\alpha^{i} e_\alpha.
\end{equation}
This establishes isomorphism (\ref{iso1}).

\end{proof}

Now following (\ref{Pi}) we define operators $P_\alpha:F(\ba')\rightarrow F(\ba')$, $\alpha\in \Delta_+$ as follows
\begin{equation}\label{Pa}
(P_\alpha f)(y)=f({\rm Ad} e^{\frac{c_{\alpha}}{\alpha(h)}e_{\alpha}}y)=(e^{-\frac{c_{\alpha}}{\alpha(h)}e_{\alpha}}f)(y),
\end{equation}
where $y$ is given by (\ref{y}).

Introduce functions $\widehat{h}_\alpha$, $\widehat{e}_\alpha$, $\alpha\in \Delta$ on $\g$ by $\widehat{e}_\alpha(y)=c_{-\alpha}$, $\widehat{h}_\alpha(y)=\alpha(h)$ where $y=h+\sum_{\alpha\in \Delta}c_\alpha e_\alpha,~h\in \h,c_\alpha \in \mathbb{C}$.

For $\alpha\in \Delta_+$ we denote the restrictions of $\widehat{e}_{-\alpha}$ and $\widehat{h}_\alpha$ to $\ba'$ by the same letters.

Using these functions one can also rewrite formula (\ref{Pa}) as follows
\begin{equation}\label{Ps}
(P_\alpha f)(y)=\sum_{n=0}^\infty \frac{(-1)^n}{n!}\widehat{h}_\alpha^{-n}(y)\widehat{e}_{-\alpha}^n(y) (e_\alpha^n f)(y),
\end{equation}
where $e_\alpha^n f$ stands for the Lie algebra action of the root vector $e_\alpha$ induced by the Lie group action of $N$ on $\ba'$.

According to (\ref{PC}) the projection operator $P:F(\ba')\rightarrow F(\ba')^N$ corresponding to isomorphism (\ref{iso1}) can be represented in the form
\begin{equation}\label{pfact}
P=P_{\beta_N}\ldots P_{\beta_1}.
\end{equation}

Here $F(\ba')$ is the algebra of $C^\infty$--functions on $\ba'$ or, recalling that (\ref{isob}) is a birational equivalence, the algebra of rational functions on $\ba$. Let $\mathbb{C}[\h]'$ be the algebra of rational functions on $\h$ denominators of which are products of linear factors of the form $\alpha(h)$, $\alpha\in \Delta_+$, $h\in \h$. Looking at formula (\ref{Ps}) one can immediately deduce that $P$ is in fact defined in a smaller algebra $\mathbb{C}[\ba]'=\mathbb{C}[\ba]\otimes_{\mathbb{C}[\h]}\mathbb{C}[\h]'$, the localization of the algebra of regular functions $\mathbb{C}[\ba]$ on $\ba$ on $\mathbb{C}[\h]'$.

One can also rewrite formula (\ref{Ps}) in terms of the Kirillov-Kostant Poisson bracket on $\g^*$. Indeed, one can identify $\g^*$ and $\g$ using the Killing form on $\g$ and equip the algebra of regular functions $\mathbb{C}[\g]$ with the Kirillov-Kostant Poisson bracket. This bracket naturally extends to the localization $\mathbb{C}[\g]'=\mathbb{C}[\g]\otimes_{\mathbb{C}[\h]}\mathbb{C}[\h]'$.

Now assume that the Killing form is normalized in such a way that $\{\widehat{e}_\alpha,\widehat{e}_{-\alpha}\}=\widehat{h}_\alpha$, $\{\widehat{h}_\alpha,\widehat{e}_{\pm\alpha}\}=\pm 2\widehat{e}_{\pm \alpha}$, $\alpha \in \Delta_+$.

Note that the algebra $\mathbb{C}[\ba]'$ can be naturally identified with the quotient $\mathbb{C}[\g]'/I$, where $I$ is the ideal in $\mathbb{C}[\g]'$ which consists of elements vanishing on $\ba$. Since $\g=\bigoplus_{\alpha \in \Delta}\g_\alpha \oplus \h$ is a direct sum of vector spaces and  $\ba=\bigoplus_{\alpha \in \Delta_+}\g_\alpha \oplus \h$ the ideal $I$ is
generated by the functions $\widehat{e}_\alpha$, $\alpha\in \Delta_+$.

The action of $N$ on $\g$ induced by the adjoint action of $G$ becomes Hamiltonian with respect to the Kirillov--Kostant Poisson structure, and for any function $\varphi$ on $\g$ we have
${e}_\alpha\varphi=\{\widehat{e}_\alpha,\varphi\}$.
Using this observation formula (\ref{Ps}) can be rewritten in the form
\begin{equation}\label{Pp}
(P_\alpha f)(y)=\sum_{n=0}^\infty \frac{(-1)^n}{n!}\widehat{h}_\alpha^{-n}(y)\widehat{e}_{-\alpha}^n(y) \{\widehat{e}_\alpha, \widetilde{f}\}^n(y)~{\rm (mod}~I{\rm )},
\end{equation}
where $\widetilde{f}\in \mathbb{C}[\g]'$ is any representative of $f\in \mathbb{C}[\ba]'=\mathbb{C}[\g]'/I$ in $\mathbb{C}[\g]'$, and
$$
\{e_\alpha \widetilde{f}\}^n=\underbrace{\{\widehat{e}_\alpha,\{\widehat{e}_\alpha, \ldots,\{\widehat{e}_\alpha,\widetilde{f}\}\ldots\}\}}_{\rm n~times}.
$$
Note that the ideal $I$ is invariant under taking Poisson brackets with functions $\widehat{e}_\alpha$, $\alpha\in \Delta_+$, and hence the right hand side of (\ref{Pp})  does not depend on the choice of the representative $\widetilde{f}$.

We can summarize the above discussion in the following proposition.
\begin{proposition}\label{1}
Let $G$ be a complex semisimple algebraic group with Lie algebra $\g$, $\h$ a Cartan subalgebra of $\g$, $\ba$ the Borel subalgebra of the Lie algebra $\g$ containing $\h$, $\n\subset \ba$ the nilradical of $\ba$, and $N$ the maximal unipotent subgroup $N\subset G$ corresponding to $\n$. Let $\mathbb{C}[\ba]'=\mathbb{C}[\ba]\otimes_{\mathbb{C}[\h]}\mathbb{C}[\h]'$ be the localization of the algebra of regular functions $\mathbb{C}[\ba]$ on $\ba$ on the algebra of rational functions $\mathbb{C}[\h]'$ on $\h$ denominators of which are products of linear factors of the form $\alpha(h)$, $\alpha\in \Delta_+$, $h\in \h$.

Let $N\times \ba \rightarrow \ba$ be the action of $N$ on $\ba$ induced by the adjoint action of $G$. Then the map
$$
N\times \h \rightarrow \ba
$$
is a birational equivalence, and the corresponding projection operator $P:\mathbb{C}[\ba]'\rightarrow \mathbb{C}[\ba]'^N$ can be written in the form
$$
P=P_{\beta_N}\ldots P_{\beta_1},
$$
where $\beta_1, \ldots ,\beta_N$ is a normal ordering of the system of positive roots $\Delta_+$ of the pair $(\g,\ba)$, and for $\alpha\in \Delta_+$ the operators $P_\alpha$ are given by formulas (\ref{Ps}) or (\ref{Pp}).

The kernel of the projection operator $P$ is the ideal in the algebra $\mathbb{C}[\ba]'$ generated by the elements $\widehat{e}_{-\alpha}$, $\alpha\in \Delta_+$.
\end{proposition}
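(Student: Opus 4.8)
The statement collects three assertions, the first two of which are recapitulations of what precedes it; I would dispatch them quickly and concentrate on the third. The birational equivalence $N\times\h\rightarrow\ba$ is exactly the content of the Lemma, namely the map \eqref{isob} obtained from the isomorphism \eqref{iso1}, so here I would merely cite it. The factorisation $P=P_{\beta_N}\ldots P_{\beta_1}$ is the specialisation of the general identity \eqref{PC} to the present setting: the normal ordering $\beta_1,\ldots,\beta_N$ yields a unique factorisation of $N$ into one--parameter root subgroups, and the explicit computation \eqref{isof} exhibits $g(y)$ as the corresponding ordered product of exponentials. Hence the factors of \eqref{Pi} reproduce the operators $P_{\beta_i}$ of \eqref{Pa}, and \eqref{PC} becomes \eqref{pfact}.

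The substantive part is the description of $\ker P$, and here the plan is to exploit the geometric meaning of $P$ given by \eqref{P1}. Writing $x(y)\in\h'$ for the unique cross--section representative of the $N$--orbit of $y$, one has $(Pf)(y)=f(x(y))$; that is, $Pf$ is the $N$--invariant extension of the restriction $f|_{\h'}$. Two consequences follow at once. Since a function invariant under $N$ takes the same value at $y$ and at $x(y)$, the operator $P$ fixes $\mathbb{C}[\ba]'^N$ and is therefore an idempotent with image $\mathbb{C}[\ba]'^N$. And since every point of $\h'$ is its own representative, $Pf=0$ holds if and only if $f$ vanishes identically on $\h'$; thus $\ker P$ is precisely the vanishing ideal of $\h'\subset\ba'$, which in particular is an ideal of $\mathbb{C}[\ba]'$.

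It remains to identify this vanishing ideal with the ideal generated by the functions $\widehat{e}_{-\alpha}$, $\alpha\in\Delta_+$. The plan is to use the coordinate description coming from $\ba=\h\oplus\bigoplus_{\alpha\in\Delta_+}\g_\alpha$: the $\widehat{e}_{-\alpha}$ are the linear coordinates on the nilpotent part, so $\mathbb{C}[\ba]'$ is the polynomial ring $\mathbb{C}[\h]'[\widehat{e}_{-\alpha}:\alpha\in\Delta_+]$ over $\mathbb{C}[\h]'$, and $\h'$ is exactly the common zero locus $\{\widehat{e}_{-\alpha}=0:\alpha\in\Delta_+\}$. For such a polynomial ring the kernel of the evaluation homomorphism sending every $\widehat{e}_{-\alpha}$ to $0$ is the ideal they generate. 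This is the only point where a genuine, though entirely standard, commutative--algebra argument enters, and it is the step I would be most careful about; the one thing to verify is that it is unaffected by the localisation at $\mathbb{C}[\h]'$, which is clear because $\mathbb{C}[\ba]'$ remains a polynomial ring over $\mathbb{C}[\h]'$ in the same variables. Combining this with the previous paragraph yields $\ker P=(\widehat{e}_{-\alpha}:\alpha\in\Delta_+)$, as claimed.
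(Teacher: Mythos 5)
Your proposal is correct and follows essentially the same route as the paper: you cite the Lemma for the birational equivalence and specialize the general factorisation (\ref{PC}) via the explicit product of exponentials (\ref{isof}) to get $P=P_{\beta_N}\ldots P_{\beta_1}$, which is exactly how the paper ``summarizes the above discussion''. Your kernel argument --- reading $(Pf)(y)=f(x(y))$ off (\ref{P1}), so that $\ker P$ is the vanishing ideal of the cross--section $\h'$, and then identifying that ideal with $(\widehat{e}_{-\alpha}:\alpha\in\Delta_+)$ because $\mathbb{C}[\ba]'$ is a polynomial ring over $\mathbb{C}[\h]'$ in the coordinates $\widehat{e}_{-\alpha}$ --- is precisely the standard detail the paper asserts without proof, so you have supplied it rather than deviated from it.
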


One can also consider partial projection operators $P_{\geq k}=P_{\beta_N}\ldots P_{\beta_k}$ acting on $\mathbb{C}[\ba]'$. Arguments similar to those given above show that the image of $P_{\geq k}$ consists of elements of $\mathbb{C}[\ba]'$ which are invariant with respect to the action of the subgroup $N_{\geq k}\subset N$ the Lie algebra of which is generated by the root vectors $e_\alpha$, $\alpha\geq \beta_k$ with respect to the fixed normal ordering of $\Delta_+$,
\begin{equation}\label{Ppart}
P_{\geq k}:\mathbb{C}[\ba]'\rightarrow \mathbb{C}[\ba]'^{N_{\geq k}}.
\end{equation}

Now we compare the operator $P$ with extremal projection operators. Let $\Ug$ be the universal enveloping algebra of $\g$, $\Ug'=\Ug \otimes_{\Uh}D(\h)$ the localization of $\Ug$ on the algebra of fractions $D(\h)$ of the universal enveloping algebra $\Uh$ of $\h$ denominators of which are products of factors of the form $h_\alpha+m$, $\alpha\in \Delta_+$, $m\in \mathbb{Z}$. Consider the left $\g$--module $V=\Ug'/\Ug'\n$. $V$ is a ``noncommutative'' analogue of the space $\mathbb{C}[\ba]'$. Since the action of the Lie algebra $\n$  on $V$ is locally nilpotent the operators
\begin{equation}\label{Pq}
p_\alpha(t)=\sum_{n=0}^\infty \frac{(-1)^n}{n!}f_{\alpha,n}^{-1}(t){e}_{-\alpha}^n e_\alpha^n,~f_{\alpha,n}(t)=\prod_{j=1}^n({h}_\alpha+t+j),~t\in \mathbb{C},~\alpha\in \Delta_+
\end{equation}
are well defined on $V$.
\begin{proposition}{\bf (\cite{AST3}, Main Theorem)}\label{2}
The operator
$$
p=p_{\beta_N}(\rho(h_{\beta_N}))\ldots p_{\beta_1}(\rho(h_{\beta_1})),
$$
where $\rho=\frac{1}{2}\sum_{\alpha\in \Delta_+}\alpha$, is the projection operator onto the subspace
$$
V^{\n}=\{v\in V:~xv=0~\forall~x\in \n\}
$$
with the kernel $\opn V$, where $\opn$ is the maximal nilpotent subalgebra of $\g$ opposite to $\n$.
\end{proposition}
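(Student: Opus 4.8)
The plan is to deduce the quantum statement from its classical counterpart, Proposition~\ref{1}, by means of the PBW filtration, while extracting the exact idempotency from rank--one $\mathfrak{sl}_2$ identities that the classical limit cannot detect. First I would equip $\Ug'$ with the (localized) PBW filtration, whose associated graded is $\mathbb{C}[\g]'$ and under which the symbol of $e_\alpha\in\n$ is $\widehat e_\alpha$. The module $V=\Ug'/\Ug'\n$ then inherits a filtration with $\mathrm{gr}\,V\cong\mathbb{C}[\g]'/I=\mathbb{C}[\ba]'$, the classical module of Proposition~\ref{1}, and the $\g$--action degenerates to the Poisson action $e_\alpha\mapsto\{\widehat e_\alpha,-\}$ of formula (\ref{Pp}). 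I would then check that each factor $p_\alpha(\rho(h_\alpha))$ preserves this filtration --- the series (\ref{Pq}) being a finite sum on every filtered piece by local nilpotency of $e_\alpha$ --- and that its principal symbol is exactly the operator $P_\alpha$ of (\ref{Ps})--(\ref{Pp}), since the shift $t=\rho(h_\alpha)$ and the integers $j$ inside $f_{\alpha,n}$ are of lower order and drop out of the symbol. Consequently $\mathrm{gr}\,p=P$.

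Next I would record the exact rank--one identities. For a single positive root $\alpha$ the standard $\mathfrak{sl}_2$ telescoping computations give $e_\alpha\,p_\alpha(t)=0$ and $p_\alpha(t)\,e_{-\alpha}=0$, and $p_\alpha(t)v=v$ whenever $e_\alpha v=0$; the parameter $t$ is precisely the freedom needed to absorb a weight shift of $h_\alpha$. The last identity is the cheap half of the theorem: for $v\in V^{\n}$ every factor fixes $v$, so $p\,v=v$, giving $V^{\n}\subseteq\mathrm{Im}\,p$ and $p|_{V^{\n}}=\mathrm{id}$.

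The heart of the proof is the reverse inclusion $\mathrm{Im}\,p\subseteq V^{\n}$, i.e. $e_{\beta_i}p=0$ for every $i$. For $i=N$ this is the rank--one identity applied to the leftmost factor. For $i<N$ I would commute $e_{\beta_i}$ to the right through $p_{\beta_N},\dots,p_{\beta_{i+1}}$ until it meets $p_{\beta_i}$; by the defining property of the normal ordering the commutators produced along the way involve only root vectors $e_{\beta_i+\beta_j}$ whose position lies strictly between $\beta_i$ and $\beta_j$, so they are in turn absorbed by the factors still standing to their right, and the precise value $t=\rho(h_{\beta_i})$ is what makes the weight shifts hidden in the denominators $f_{\beta_i,n}$ cancel. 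This commutation step is exactly what \cite{AST3} settles by a case--by--case rank--two analysis, and I expect it to be the main obstacle; the geometric content of Proposition~\ref{1} --- that the group factorization $N=N_{\beta_N}\cdots N_{\beta_1}$ already yields a projection onto the \emph{full} space of $N$--invariants --- is what guarantees that the product must collapse in this way, while $\mathrm{gr}(e_{\beta_i}p)=0$ (the image of $P$ consisting of $N$--invariant functions) provides the leading--order check.

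Finally I would identify the kernel. Knowing $\mathrm{Im}\,p=V^{\n}$ and $p|_{V^{\n}}=\mathrm{id}$ gives $p^2=p$ and $V=V^{\n}\oplus\ker p$. Propagating the rank--one identity $p_{\beta_k}(\cdot)e_{-\beta_k}=0$ through the product by the same commutation analysis yields $p\,e_{-\alpha}=0$ for every $\alpha\in\Delta_+$, hence $\opn V\subseteq\ker p$. Since $\mathrm{gr}(\opn V)$ is the ideal of $\mathbb{C}[\ba]'$ generated by the functions $\widehat e_{-\alpha}$, which by Proposition~\ref{1} equals $\ker P=\ker(\mathrm{gr}\,p)$, the sandwich $\mathrm{gr}(\opn V)\subseteq\mathrm{gr}(\ker p)\subseteq\ker(\mathrm{gr}\,p)=\mathrm{gr}(\opn V)$ forces $\mathrm{gr}(\opn V)=\mathrm{gr}(\ker p)$, and a filtered comparison then upgrades the inclusion $\opn V\subseteq\ker p$ to the equality $\ker p=\opn V$, completing the proof.
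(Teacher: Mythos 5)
The decisive step --- $\mathrm{Im}\,p\subseteq V^{\n}$, i.e. $e_{\beta_i}p=0$ for $i<N$ --- is missing from your argument, and it cannot be supplied by the filtration machinery you set up. Passing to the PBW associated graded only controls symbols: since the symbol of left multiplication by $e_{\beta_i}$ on $\mathrm{gr}\,V\simeq\mathbb{C}[\ba]'$ is multiplication by $\widehat{e}_{\beta_i}\equiv 0$, and the image of $P=\mathrm{gr}\,p$ consists of $N$--invariant functions, the most you can conclude is that $e_{\beta_i}p$ strictly lowers the filtration degree; a filtered operator with vanishing symbol is by no means zero. The exact vanishing is carried entirely by the data your classical limit discards: as the paper itself remarks, the integer shifts $j$ in $f_{\alpha,n}(t)=\prod_{j=1}^{n}(h_\alpha+t+j)$ and the precise arguments $t=\rho(h_{\beta_i})$ are ``purely quantum corrections'', invisible in $\mathrm{gr}$. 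You concede the point by deferring the commutation analysis to the rank--two case check of \cite{AST3} --- but that analysis \emph{is} the theorem, so what survives of your proposal is a consistency check (the symbol identity $\mathrm{gr}\,p=P$, the easy half $p|_{V^{\n}}=\mathrm{id}$, and a filtered sandwich for the kernel which is valid only after $\mathrm{Im}\,p=V^{\n}$ and $p\,e_{-\alpha}=0$ have been established), not a proof. There is also a local error: the rank--one identities $e_\alpha p_\alpha(t)=0$ and $p_\alpha(t)e_{-\alpha}=0$ fail for general $t$; a direct $\mathfrak{sl}_2$ computation shows that the coefficient of $e_{-\alpha}^{m}e_{\alpha}^{m+1}$ in $e_\alpha p_\alpha(t)$ is proportional to $1-(h_\alpha+m+2)(h_\alpha+t+m+1)^{-1}$, which vanishes for all $m$ only when $t=1=\rho(h_\alpha)$ in the rank--one normalization, so even the leftmost-factor step $e_{\beta_N}p=0$ needs the specific value of $t$, not the ``standard telescoping'' you invoke for arbitrary $t$.

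As for the comparison with the paper: the paper gives no proof of Proposition \ref{2} at all --- it is quoted from \cite{AST3}, and the introduction states explicitly that the known proof reduces to case-by-case analysis for Lie algebras of rank two (Zhelobenko's completion argument of \cite{Z1} being the alternative route). The paper's actual contribution is precisely the analogy you attempted to promote to a proof: Proposition \ref{1} is the classical counterpart, and Proposition \ref{2} ``can be regarded as a quantum version'' of it. Your proposal thus recasts the paper's heuristic in rigorous filtered language, which is fine as far as it goes, but it does not quantize the geometric argument; to close the gap one would need an exact mechanism above the symbol level --- for instance, working in Zhelobenko's completion of $\Ug'$, where the identities $e_\alpha p=p\,e_{-\alpha}=0$ together with a normalization characterize $p$ uniquely, after which the multiplicative formula can be verified against that characterization.
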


The operator $p$ is called the extremal projection operator for the Lie algebra $\g$.

The operators $p_\alpha(t)$ can also be rewritten in terms of iterated commutators as follows
\begin{equation}\label{Pq1}
p_\alpha(t)v=\sum_{n=0}^\infty \frac{(-1)^n}{n!}f_{\alpha,n}^{-1}(t){e}_{-\alpha}^n ({\rm ad}~e_\alpha)^n(\widetilde{v})v_0,
\end{equation}
where $\widetilde{v}$ is any representative of $v$ in $\Ug'$, and $v_0$ is the image of $1\in \Ug'$ under the natural projection $\Ug'\rightarrow V=\Ug'/\Ug'\n$.

Formulas (\ref{Pq}) and (\ref{Pq1}) are natural analogues of (\ref{Ps}) and (\ref{Pp}) for the module $V$, and the projection operator $p$ is a counterpart of $P$ for $V$. Note that $\Ug$ can be obtained by quantizing the Kirillov--Kostant Poisson bracket on $\mathbb{C}[\g]$, and hence Proposition \ref{2} can be regarded as a quantum version of Proposition \ref{1}. Note that after quantization the functions $\widehat{e}_\alpha$, $\widehat{h}_\alpha$ become the root vectors $e_\alpha$ and the Cartan subalgebra generators $h_\alpha$. Integer shifts in the denominators in formula (\ref{Pq}) are purely quantum corrections.

\section{Quasi--classical versions of extremal projection operators and the cohomology of the nilradical}

Now we would like to mention a relation of birational equivalence (\ref{isob}) to the problem of calculation of the cohomology ${\rm Ext}^\bullet_{U(\g)}(U(\g)\otimes_{U(\n)}\mathbb{C}_0,U(\g)\otimes_{U(\n)}\mathbb{C}_0)$, where $\mathbb{C}_0$ is the trivial representation of $U(\n)$. This problem was formulated by M. Duflo in \cite{duf}. The importance of this cohomology is due to the fact that ${\rm Ext}^\bullet_{U(\g)}(U(\g)\otimes_{U(\n)}\mathbb{C}_0,U(\g)\otimes_{U(\n)}\mathbb{C}_0)$ is in fact an algebra acting on the spaces ${\rm Ext}_{U(\n)}^\bullet({\mathbb C}_{0},V)$ for left $U({\mathfrak g})$--modules $V$ (see \cite{7,11}). This algebra is an example of the quantum BRST cohomology (see \cite{KSt}).

Note that by Frobenius reciprocity one has
\begin{eqnarray*}{\rm Ext}^\bullet_{U(\g)}(U(\g)\otimes_{U(\n)}\mathbb{C}_0,U(\g)\otimes_{U(\n)}\mathbb{C}_0)={\rm Ext}^\bullet_{U(\n)}(\mathbb{C}_0,U(\g)\otimes_{U(\n)}\mathbb{C}_0)= \\ ={\rm Ext}^\bullet_{U(\n)}(\mathbb{C}_0,U(\g)/U(\g)\n),
\end{eqnarray*}
and one can consider the corresponding ``classical'' BRST cohomology which is the cohomology ${\rm Ext}^\bullet_{U(\n)}(\mathbb{C}_0,\mathbb{C}[\ba])$ of the graded $U(\n)$--module $\mathbb{C}[\ba]$ associated to the filtered $U(\n)$--module $U(\g)/U(\g)\n$, the filtration being induced by the canonical filtration on $U(\g)$. Here the action of $U(\n)$ on $\mathbb{C}[\ba]$ is induced by the adjoint action of $\n$ on $\ba$. The cohomology ${\rm Ext}^\bullet_{U(\n)}(\mathbb{C}_0,\mathbb{C}[\ba])$ is an example of the classical BRST cohomology. In particular, ${\rm Ext}^\bullet_{U(\n)}(\mathbb{C}_0,\mathbb{C}[\ba])$ is naturally a Poisson algebra (see \cite{KSt}).

It is easy to show that ${\rm Ext}^0_{U(\n)}(\mathbb{C}_0,U(\g)/U(\g)\n)=U(\h)$ and ${\rm Ext}^0_{U(\n)}(\mathbb{C}_0,\mathbb{C}[\ba])=\mathbb{C}[\h]$. But in both cases positive graded components of the cohomology are nontrivial.

Now consider the cohomology ${\rm Ext}^\bullet_{U(\n)}(\mathbb{C}_0,\mathbb{C}[\ba]')$. Formula (\ref{isof}) for birational equivalence (\ref{isob}) implies that we have an $U(\n)$--module isomorphism $\mathbb{C}[\ba]'\simeq \mathbb{C}[N]\otimes \mathbb{C}[\h]'$, where the action of $U(\n)$ on $\mathbb{C}[N]\otimes \mathbb{C}[\h]'$ is induced by the action of $\n$ on $N$ by right invariant vector fields. Thus
\begin{eqnarray*}
{\rm Ext}^\bullet_{U(\n)}(\mathbb{C}_0,\mathbb{C}[\ba]')={\rm Ext}^\bullet_{U(\n)}(\mathbb{C}_0,\mathbb{C}[N]\otimes \mathbb{C}[\h]')={\rm Ext}^\bullet_{U(\n)}(\mathbb{C}_0,\mathbb{C}[N])\otimes \mathbb{C}[\h]'= \\ =H_{dR}^\bullet(N)\otimes \mathbb{C}[\h]'=\mathbb{C}[\h]'
\end{eqnarray*}
since ${\rm Ext}^\bullet_{U(\n)}(\mathbb{C}_0,\mathbb{C}[N])=H_{dR}^\bullet(N)$, where $H_{dR}^\bullet(N)$ is the de Rham cohomology of $N$, and $H_{dR}^\bullet(N)=\mathbb{C}$ because $N$ is unipotent.
In particular, ${\rm Ext}^0_{U(\n)}(\mathbb{C}_0,\mathbb{C}[\ba]')=\mathbb{C}[\h]'$, and the positive graded components of the cohomology ${\rm Ext}^\bullet_{U(\n)}(\mathbb{C}_0,\mathbb{C}[\ba]')$ are trivial. This indicates that nontrivial cohomology classes of positive degrees in ${\rm Ext}^\bullet_{U(\n)}(\mathbb{C}_0,\mathbb{C}[\ba])$ exist because the space $\mathbb{C}[\ba]$ does not contain some elements with $\mathbb{C}[\h]$--valued denominators which are products of linear factors of the form $\alpha(h)$, $\alpha\in \Delta_+$, $h\in \h$, and all nontrivial cohomology classes of positive degrees in ${\rm Ext}^\bullet_{U(\n)}(\mathbb{C}_0,\mathbb{C}[\ba])$ become trivial in ${\rm Ext}^\bullet_{U(\n)}(\mathbb{C}_0,\mathbb{C}[\ba]')$. Similar fact should hold for the positive degree cohomology classes from the space ${\rm Ext}^\bullet_{U(\n)}(\mathbb{C}_0,U(\g)/U(\g)\n)$.
This deserves further investigation in the framework of algebraic geometry.

\section{Quasi--classical analogues of Zhelobenko operators}

Now we would like to present an analogue of Proposition \ref{1} for the so--called Zhelobenko operators.
In \cite{Z1} Zhelobenko considered the following formal series
\begin{eqnarray}\label{qa}
q_\alpha' (\widetilde{x})=\sum_{n=0}^\infty \frac{(-1)^n}{n!}({\rm ad}~e_\alpha)^n(\widetilde{x}){e}_{-\alpha}^ng_{\alpha,n}^{-1},~g_{\alpha,n}= \\
=\prod_{j=1}^n({h}_\alpha+1-j),~\alpha \in \Delta_+,~\widetilde{x}\in \Ug'. \nonumber
\end{eqnarray}

Let $W$ be the Weyl group of the pair $(\g,\h)$. Denote by $s_\alpha$ the reflection with respect to the root $\alpha \in \Delta$.
One can consider the formal compositions
\begin{equation}\label{qw}
q_w'(\widetilde{x})=q_{\beta_N}'(\ldots (q_{\beta_k}'(\widetilde{x}))\ldots ),~\widetilde{x}\in \Ug',
\end{equation}
where $\beta_1, \ldots ,\beta_N$ is a normal ordering of $\Delta_+$, and
\begin{equation}\label{w}
w=s_{\beta_k}\ldots s_{\beta_N}.
\end{equation}
Note that according to the results of \S 3 in \cite{Z3} every element $w\in W$ can be represented in form (\ref{w}),
and $\Delta_w=\{ \alpha\in \Delta_+:~w^{-1}\alpha<0 \}=\{ \beta_k,\ldots ,\beta_N \}$.

For $w\in W$ we shall also consider the $\Ug'$--modules $V_w=\Ug'/\Ug'\n^w$, where $\n^w$ is the Lie subalgebra of $\g$ generated by the root vectors $e_{w\alpha}$, $\alpha \in \Delta_+$.
\begin{proposition}{\bf (\cite{Z3}, Proposition 1, Theorem 2)}\label{3}
For any element $w\in W$ the composition defined by the right hand side of formula (\ref{qw}) is well defined as an operator
$$
q_w:V_w\rightarrow V, ~x\mapsto q_w'(\widetilde{x})~({\rm mod} ~\Ug'\n),
$$
where $\widetilde{x}$ is a representative in $\Ug'$ of the element $x \in V_w=\Ug'/\Ug'\n^w$. The operator $q_w$ does not depend on representation (\ref{w}) of the element $w\in W$.

The image of $q_w$ coincides with the subspace
$$
V^{w}=\{v \in V:~e_\alpha v=0~\forall~\alpha \in \Delta_w \},
$$
and the kernel of $q_w$ is the subspace $\n_w V_w$, where $\n_w$ is the Lie subalgebra of $\g$ generated by the root vectors $e_{ \alpha}$, $\alpha \in \Delta_w$.
\end{proposition}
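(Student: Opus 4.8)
The plan is to mirror the route from Proposition \ref{1} to Proposition \ref{2}: I would first prove a transparent Poisson counterpart of $q_w$ using birational equivalence (\ref{isob}), and then recover the assertion for $V$ by quantization, with the integer shifts in $g_{\alpha,n}=\prod_{j=1}^n(h_\alpha+1-j)$ playing the role of purely quantum corrections exactly as for (\ref{Pq}). I begin by recording the subalgebra structure attached to $w$. Since $\Delta_w=\Delta_+\cap w\Delta_-$ is closed under addition, $\n_w=\bigoplus_{\gamma\in\Delta_w}\g_\gamma$ is a subalgebra; writing $\n_w^c=\bigoplus_{\delta\in\Delta_+\setminus\Delta_w}\g_\delta$ one finds $\n=\n_w^c\oplus\n_w$, whereas $\n^w=\n_w^c\oplus\opn_w$ with $\opn_w=\bigoplus_{\gamma\in\Delta_w}\g_{-\gamma}$. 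Thus $\ba$ and $\ba^w:=\h\oplus\n^w$ share the slice $\h\oplus\n_w^c$ and differ only by the sign of the roots in $\Delta_w$; correspondingly the defining ideals $I,I^w\subset\mathbb{C}[\g]'$ of $\ba,\ba^w$ share all generators except that $I$ carries $\widehat e_\gamma$ and $I^w$ carries $\widehat e_{-\gamma}$ for $\gamma\in\Delta_w$.

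Classically I claim that $q_w$ is realised by the partial projection $P_{\geq k}$ of (\ref{Ppart}). Indeed $N_w=\exp(\n_w)$ equals $N_{\geq k}$, and a direct quasi--classical limit of (\ref{qa}) (replacing $\mathrm{ad}\,e_\alpha$ by $\{\widehat e_\alpha,\cdot\}$, $e_{-\alpha}$ by $\widehat e_{-\alpha}$ and $g_{\alpha,n}^{-1}$ by $\widehat h_\alpha^{-n}$) shows that each factor reduces to the operator $P_{\beta_i}$ of (\ref{Pp}); the reversal of the order of the factors between (\ref{qa}) and (\ref{Ps}) is invisible in the commutative Poisson algebra. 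The content of the classical statement is then that $P_{\geq k}$, viewed on $\mathbb{C}[\g]'$, maps $I^w$ into $I$, hence descends to an operator $\mathbb{C}[\ba^w]'\ra\mathbb{C}[\ba]'$ whose image is the algebra $\mathbb{C}[\ba]'^{N_w}$ of $N_w$--invariants and whose kernel is the classical counterpart of $\n_w V_w$; the first fact holds because $P_{\geq k}$ is the projection attached to the free action with cross--section $\h\oplus\n_w^c$, and the second is read off from the rank--one factorisation below.

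To pass to $V$ I would reduce to rank one along the normal ordering. Put $w_i=s_{\beta_i}\cdots s_{\beta_N}$, so that $w_k=w$, $w_{N+1}=e$ and $\Delta_{w_i}=\{\beta_i,\ldots,\beta_N\}$; passing from $V_{w_i}$ to $V_{w_{i+1}}$ flips the single root $\beta_i$ from $\g_{-\beta_i}\subset\n^{w_i}$ to $\g_{\beta_i}\subset\n^{w_{i+1}}$. Each factor $q_{\beta_i}$ is therefore the Zhelobenko operator of the $\mathfrak{sl}_2$--triple $(e_{\beta_i},h_{\beta_i},e_{-\beta_i})$, which is precisely Lelong's rank--one construction \cite{L}: its image is the space of $e_{\beta_i}$--highest vectors and its kernel is $e_{\beta_i}V_{w_i}$. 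The normal ordering guarantees that $\beta_i$ is extremal at the $i$--th stage (if $\beta_i+\alpha$ is a root it precedes $\beta_i$, as used in the Lemma), so the finitely many terms of $q_{\beta_i}$ act without obstruction and the composition $q_w=q_{\beta_N}\circ\cdots\circ q_{\beta_k}$ descends to a well--defined map $V_w\ra V$. Tracking image and kernel through the $N-k+1$ rank--one steps then yields $\mathrm{im}\,q_w=V^w$ and $\ker q_w=\n_w V_w$, the sought quantization of the classical statement.

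Independence of the factorisation (\ref{w}) I would deduce from intrinsicness rather than from manipulating the series: $V^w=\{v:e_\alpha v=0,\ \alpha\in\Delta_w\}$ and $\n_w V_w$ depend only on the set $\Delta_w$, hence only on $w$, and the map with kernel $\n_w V_w$ and image $V^w$ sending the cyclic generator of $V_w$ to that of $V$ is unique; any two factorisations therefore produce the same $q_w$. The step I expect to be the true obstacle is the well--definedness of the formal composition, namely the inclusion $q_w'(\Ug'\n^w)\subseteq\Ug'\n$ that lets $q_w$ descend to $V_w\ra V$. Classically this is the transparent assertion $P_{\geq k}(I^w)\subseteq I$, but in $\Ug'$ it is entangled with the non--commutation of the successive factors and with the $\rho$--type shifts; it is exactly this entanglement that pins down the denominators $g_{\alpha,n}$ and that is responsible for the ``complicated technical arguments'' of \cite{Z3} which the geometric picture is meant to demystify.
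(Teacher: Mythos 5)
There is a basic mismatch to flag first: the paper does not prove Proposition \ref{3} at all. It is quoted verbatim from \cite{Z3} (Proposition 1, Theorem 2), and the introduction explicitly describes Zhelobenko's proof as ``a series of complicated technical arguments'' that the paper does not reproduce or replace; the paper's own contribution is the \emph{classical} analogue, Proposition \ref{4}, proved geometrically via the birational equivalence (\ref{re2}). Judged on its own terms, your proposal is not a proof but an outline whose decisive step is missing. The central move --- ``recover the assertion for $V$ by quantization'' --- is a heuristic, not an argument: the quasi-classical limit erases exactly the integer shifts in $g_{\alpha,n}=\prod_{j=1}^n(h_\alpha+1-j)$, which the paper itself calls ``purely quantum corrections'' invisible at the level of $\mathbb{C}[\ba]'$. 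No degeneration argument can certify that these particular shifts (rather than any others with the same classical limit) make the key inclusion $q_w'(\Ug'\n^w)\subseteq \Ug'\n$ hold, and you concede in your final paragraph that this inclusion is ``the true obstacle'' without resolving it. A proof cannot delegate its crucial step to an expectation; that inclusion, together with the control of the non-commuting factors in (\ref{qw}), is precisely the content of \cite{Z3}.

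Two further specific gaps. First, your argument for independence of representation (\ref{w}) is invalid: a linear map is not determined by its kernel, its image, and its value on a single vector, and since $q_w$ is not a $\Ug'$-module homomorphism, cyclicity of $V_w$ provides no rigidity --- knowing that two maps share kernel $\n_w V_w$, image $V^w$, and the image of the generator does not force them to agree. In \cite{Z3} this independence is exactly the ``extremal cocycle'' property, established through braid-type relations among the operators $q_\alpha$, which is substantive work. Second, the rank-one reduction asserts that each factor $q_{\beta_i}$ has image the $e_{\beta_i}$-highest vectors and kernel $e_{\beta_i}V_{w_i}$ by importing the $\mathfrak{sl}_2$ statement; but the factors act between different quotients of $\Ug'$, ${\rm ad}\,e_{\beta_i}$ does not commute with the previously applied factors, and the $\Uh$-denominators accumulate under composition in a way the normal-ordering combinatorics (which controls only root addition) does not govern. (Minor point: reference \cite{L} is L\"owdin, not Lelong, and it constructs the $\mathfrak{sl}_2$ extremal projector, not a Zhelobenko operator.) Your classical paragraph, by contrast, essentially parallels the paper's actual proof of Proposition \ref{4} --- identifying $N_w$ with $N_{\geq k}$ of (\ref{Ppart}), the slice $\h+\n^w_1$, and the comparison of the ideals $I$ and $I^w$ --- and that part is sound, but it establishes the classical statement, not Proposition \ref{3}.
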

Operators $q_w$ are called Zhelobenko operators.

Now similarly to the case of extremal projection operators we define classical analogues of operators $q_w$. First, the classical analogue of the module $V_w$ is $\mathbb{C}[\ba^w]'=\mathbb{C}[\ba^w]\otimes_{\mathbb{C}[\h]}\mathbb{C}[\h]'$, the localization of the algebra of regular functions $\mathbb{C}[\ba^w]$ on the Borel subalgebra $\ba^w$ generated by $e_\alpha$, $\alpha \in w\Delta_+$ and by $\h$. Note that $\mathbb{C}[\ba^w]'=\mathbb{C}[\g]'/I^w$, where $I^w$ is the ideal in $\mathbb{C}[\g]'$ generated by the functions $\widehat{e}_{w\alpha}$, $\alpha \in \Delta_+$.

We can state the following proposition which is a classical analogue of Proposition \ref{3}.
\begin{proposition}\label{4}
Let $G$ be a complex semisimple algebraic group with Lie algebra $\g$, $\h$ a Cartan subalgebra of $\g$, $\ba$ the Borel subalgebra of the Lie algebra $\g$ containing $\h$, $\n\subset \ba$ the nilradical of $\ba$, and $N$ the maximal unipotent subgroup $N\subset G$ corresponding to $\n$. Denote by $W$ the Weyl group of the pair $(\g,\h)$.
Let $w\in W$ be an element of the Weyl group $W$, and $\Delta_w=\{ \alpha\in \Delta_+:~w^{-1}\alpha<0 \}$, where $\Delta_+$ is the system of positive roots of the pair $(\g,\ba)$.
Denote by $\n_w$ the Lie subalgebra of $\g$ generated by the root vectors $e_{ \alpha}$, $\alpha \in \Delta_w$.

Let $\mathbb{C}[\ba]'=\mathbb{C}[\ba]\otimes_{\mathbb{C}[\h]}\mathbb{C}[\h]'$ be the localization of the algebra of regular functions $\mathbb{C}[\ba]$ on $\ba$ on $\mathbb{C}[\h]'$, and $\mathbb{C}[\ba^w]'=\mathbb{C}[\ba^w]\otimes_{\mathbb{C}[\h]}\mathbb{C}[\h]'$ the localization of the algebra of regular functions $\mathbb{C}[\ba^w]$ on the Borel subalgebra $\ba^w$ generated by $e_\alpha$, $\alpha \in w\Delta_+$ and by $\h$.

Then the adjoint action map induces a birational equivalence,
\begin{equation}\label{re2}
N_w\times (\h+\n^w_1)\rightarrow \ba,
\end{equation}
where $N_w$ is the subgroup in $G$ corresponding to the Lie subalgebra $\n_w \subset \g$, and $\n^w_1\subset \ba$ is the Lie subalgebra of $\g$ generated by $e_\alpha$, $\alpha\in w(\Delta_+\setminus \Delta_{w^{-1}})\subset \Delta_+$.

Let $\mathbb{C}[\ba]'^{N_w}\subset \mathbb{C}[\ba]'$ be the subspace of elements which are invariant with respect to the action of $N_w$ on $\mathbb{C}[\ba]'$ induced by the adjoint action. Fix a normal ordering $\beta_1, \ldots ,\beta_N$ of $\Delta_+$ such that $w=s_{\beta_k}\ldots s_{\beta_N}$.

Then formula
\begin{equation}\label{Qw}
(Q_wf)(y)=f({\rm Ad} e^{\frac{c_{\beta_{k}}^{N-k}}{\beta_{k}(h)}e_{\beta_{k}}}\ldots {\rm Ad} e^{\frac{c_{\beta_N}}{\beta_N(h)}e_{\beta_N}}y),~y\in \ba',~f\in \mathbb{C}[\ba^w]',
\end{equation}
where the coefficients $c_{\beta_{i}}^{N-i}$, $i=k,\ldots ,N$, $c_{\beta_{N}}^{0}=c_{\beta_{N}}$ are given  as in (\ref{ind}), defines a surjective linear operator $Q_w:\mathbb{C}[\ba^w]'\rightarrow \mathbb{C}[\ba]'^{N_w}$ the kernel of which is generated by the elements $\widehat{e}_{\alpha}\in \mathbb{C}[\ba^w]'$, $\alpha \in \Delta_w$.

The operator $Q_w$ can be expressed by multiplicative formula
\begin{equation}\label{Qw1}
Q_wf=Q_{\beta_N}(\ldots (Q_{\beta_k}(f))\ldots )~{\rm (mod}~I{\rm )},~f\in \mathbb{C}[\ba^w]',
\end{equation}
and operators $Q_\alpha$ are given by
\begin{equation}\label{Qp}
Q_\alpha f=\sum_{n=0}^\infty \frac{(-1)^n}{n!}\widehat{h}_\alpha^{-n}\widehat{e}_{-\alpha}^n \{\widehat{e}_\alpha, \widetilde{f}\}^n,
\end{equation}
where $\widetilde{f}\in \mathbb{C}[\g]'$ is any representative of $f\in \mathbb{C}[\ba^w]'=\mathbb{C}[\g]'/I^w$ in $\mathbb{C}[\g]'$, and
$$
\{e_\alpha \widetilde{f}\}^n=\underbrace{\{\widehat{e}_\alpha,\{\widehat{e}_\alpha, \ldots,\{\widehat{e}_\alpha,\widetilde{f}\}\ldots\}\}}_{\rm n~times}.
$$
\end{proposition}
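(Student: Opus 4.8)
The plan is to derive all three assertions by specializing the framework of Section 1 to the subgroup $N_w$ acting on $\ba$, obtaining everything by truncating the ``peeling'' procedure from the proof of the Lemma (isomorphism (\ref{iso1})) at the $k$-th step. First I would record the structural facts that make (\ref{re2}) sensible: both $\n_w$ and $\n_1^w$ are genuine subalgebras, so that $N_w$ is a closed connected unipotent subgroup and $\h+\n_1^w$ is a subalgebra. This holds because $\Delta_w=\Delta_+\cap w\Delta_-$ and its complement $\Delta_+\setminus\Delta_w=\Delta_+\cap w\Delta_+$ are both closed under root addition; for the chosen normal ordering with $w=s_{\beta_k}\cdots s_{\beta_N}$ they are the final segment $\{\beta_k,\ldots,\beta_N\}$ and the initial segment $\{\beta_1,\ldots,\beta_{k-1}\}$, closed by the defining property of a normal ordering. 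I would then verify the identity $w(\Delta_+\setminus\Delta_{w^{-1}})=\Delta_+\setminus\Delta_w=\{\beta_1,\ldots,\beta_{k-1}\}$, which matches the description of $\n_1^w$ in the statement and gives $\dim N_w+\dim(\h+\n_1^w)=\dim\ba$. The birational equivalence (\ref{re2}) is then obtained exactly as in the Lemma, except that the transformations ${\rm Ad}\,e^{(c_{\beta_j}^{N-j}/\beta_j(h))e_{\beta_j}}$ are applied only for $j=N,N-1,\ldots,k$: by the normal-ordering argument used there, annihilating the $\beta_j$-component affects only components $\alpha<\beta_j$ and never recreates any of the already-annihilated components $\beta_{j+1},\ldots,\beta_N$, so after the $k$-th step one lands in $\h'+\n_1^w$. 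The coefficients are those recursively defined in (\ref{ind}), and reading the procedure backwards produces a rational inverse to the action map.

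The key book-keeping observation for the operator $Q_w$ is that the cross-section $X=\h'+\n_1^w$ lies inside $\ba\cap\ba^w$. Indeed $\ba^w$ carries root spaces for $w\Delta_+=(\Delta_+\setminus\Delta_w)\sqcup(-\Delta_w)$, so $\ba^w=\h\oplus\bigoplus_{\alpha\in\Delta_+\setminus\Delta_w}\g_\alpha\oplus\bigoplus_{\alpha\in\Delta_w}\g_{-\alpha}$, and $X$ is precisely the common summand $\h\oplus\bigoplus_{\alpha\in\Delta_+\setminus\Delta_w}\g_\alpha$. Hence the endpoint of the peeling lies in $\ba^w$, so $f\in\mathbb{C}[\ba^w]'$ may be evaluated there and (\ref{Qw}) is meaningful; since only $\beta_k(h),\ldots,\beta_N(h)$ enter the denominators, $Q_wf\in\mathbb{C}[\ba]'$, and it is $N_w$-invariant by construction. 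I would then identify $Q_w$ with the composite $\mathrm{res}_\ba^{-1}\circ\mathrm{res}_{\ba^w}$, where $\mathrm{res}_{\ba^w}\colon\mathbb{C}[\ba^w]'\to\mathbb{C}[X]'$ is restriction to $X$ and $\mathrm{res}_\ba\colon\mathbb{C}[\ba]'^{N_w}\to\mathbb{C}[X]'$ is the isomorphism furnished by the cross-section decomposition (\ref{re2}). Surjectivity of $Q_w$ is then immediate, and its kernel equals $\ker\mathrm{res}_{\ba^w}$, the ideal of $X$ in $\mathbb{C}[\ba^w]'$. As $X$ is cut out in $\ba^w$ exactly by the vanishing of the coordinates along $\g_{-\alpha}$, $\alpha\in\Delta_w$, this ideal is generated by $\widehat{e}_\alpha$, $\alpha\in\Delta_w$, as claimed; note also that this intrinsic description makes $Q_w$ manifestly independent of the chosen normal ordering.

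The multiplicative formula follows as for (\ref{PC}) in Section 1. Writing $\phi_\alpha(z)={\rm Ad}\,e^{(\widehat{e}_{-\alpha}(z)/\widehat{h}_\alpha(z))e_\alpha}z$ for the single-root peeling map, the recursion (\ref{ind}) gives $T=\phi_{\beta_k}\circ\cdots\circ\phi_{\beta_N}$ for the total map $y\mapsto T(y)$ in (\ref{Qw}); passing to pullbacks reverses the order, so $Q_w=T^*=\phi_{\beta_N}^*\circ\cdots\circ\phi_{\beta_k}^*$, which is (\ref{Qw1}) once $\phi_\alpha^*$ is identified with the operator $Q_\alpha$ of (\ref{Qp}). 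This identification is the same Taylor expansion that turns (\ref{Pa}) into (\ref{Ps}) and then (\ref{Pp}): one pulls the point-dependent scalar out of the exponential as the coefficients $\widehat{h}_\alpha^{-n}\widehat{e}_{-\alpha}^n$ and replaces the Lie derivative $e_\alpha^n$ by the Poisson operator $\{\widehat{e}_\alpha,\cdot\}^n$, so that $Q_\alpha$ is literally the operator $P_\alpha$ of (\ref{Pp}). I expect the only genuine subtlety, and hence the main obstacle, to be the mod-$I$ book-keeping in (\ref{Qw1}): the intermediate operators $Q_{\beta_j}$ do not individually preserve a single Borel, so one must check that the composition is independent of the representatives chosen in $\mathbb{C}[\g]'$ and well defined modulo $I$. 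This follows, as in Section 2, from the stability of each relevant ideal under the Poisson bracket $\{\widehat{e}_{\beta_j},\cdot\}$ acting on it, but verifying this stability at every stage, for the chain of ideals interpolating between $I^w$ and $I$, is the step that requires care.
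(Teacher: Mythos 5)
Your proposal is correct and follows essentially the same route as the paper's proof: you obtain the birational equivalence (\ref{re2}) by truncating the peeling procedure of the Lemma at the $k$-th step with coefficients from (\ref{ind}), use the root combinatorics $w\Delta_+=(\Delta_+\setminus\Delta_w)\sqcup(-\Delta_w)$ to identify the cross-section $\h+\n^w_1=\ba\cap\ba^w$ and the direct-sum decomposition $\ba^w=\h+\n^w_1+\overline{\n}_w$, from which $N_w$-invariance, surjectivity and the description of the kernel follow exactly as in the paper, and you derive (\ref{Qw1}) and (\ref{Qp}) by the same pullback-of-composition and Taylor-expansion argument that produced (\ref{pfact}), (\ref{Ps}) and (\ref{Pp}). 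Your explicit attention to the well-definedness of the composition modulo the interpolating ideals, and your remark that the intrinsic cross-section description makes $Q_w$ independent of the normal ordering, address points the paper disposes of with ``similarly to the case of extremal projection operators,'' so your write-up is, if anything, slightly more careful at that step.
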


\begin{proof}
First we show that for $f\in \mathbb{C}[\ba^w]'$ the function $Q_wf\in \mathbb{C}[\ba]'$ given by (\ref{Qw})
is a well--defined $N_w$--invariant function on $\ba'$, where the action of $N_w$ on $\ba'$ is given by the restriction of the adjoint action.

Indeed, by (\ref{ind}) for any $y\in \ba'$ of form (\ref{y}) one can uniquely find coefficients $c_{\beta_{i}}^{N-i}$, $i=k,\ldots ,N$, $c_{\beta_{N}}^{0}=c_{\beta_{N}}$ such that
$$
y^{N-k+1}={\rm Ad} e^{\frac{c_{\beta_{k}}^{N-k}}{\beta_{k}(h)}e_{\beta_{k}}}\ldots {\rm Ad} e^{\frac{c_{\beta_N}}{\beta_N(h)}e_{\beta_N}}y=h+\sum_{\alpha<\beta_{k}}c_\alpha^{N-k+1} e_\alpha.
$$
Since $\Delta_w=\{ \alpha\in \Delta_+:~w^{-1}\alpha<0 \}=\{ \beta_k,\ldots ,\beta_N \}$ this gives a birational equivalence
\begin{equation}\label{re1}
N_w\times (\h+\n^w_1)\rightarrow \ba,
\end{equation}
where $\n^w_1\subset \ba$ is the Lie subalgebra generated by $e_\alpha$, $0<\alpha<\beta_k$.

Now observe that $w(\Delta_+)=w(\Delta_{w^{-1}})\cup w(\Delta_+\setminus \Delta_{w^{-1}})$, and by the definition of the sets $\Delta_w$ $w(\Delta_+\setminus \Delta_{w^{-1}})\subset \Delta_+$. Moreover, by the results of \S3 in \cite{Z3} $w(\Delta_{w^{-1}})=-\Delta_w$.  Observing that the length of $w$ in $W$ is equal to that of $w^{-1}$ we deduce that ${\rm card}~\Delta_w={\rm card}~\Delta_{w^{-1}}$. We also obviously have $w^{-1}w(\Delta_+)=\Delta_+$, and hence $w(\Delta_+)\cap \Delta_w=\emptyset$, and $w(\Delta_+\setminus \Delta_{w^{-1}})=\{\beta_1,\ldots, \beta_{k-1}\}$. Therefore $\n^w_1\subset \ba$ is the Lie subalgebra of $\g$ generated by $e_\alpha$, $\alpha\in w(\Delta_+\setminus \Delta_{w^{-1}})\subset \Delta_+$, and the Borel subalgebra $\ba^w$ is generated by $\h$, by $e_{\beta_1},\ldots, e_{\beta_{k-1}}$, and by $e_\alpha$, $\alpha \in -\Delta_w$. We denote the Lie subalgebra generated by $e_\alpha$, $\alpha \in -\Delta_w$ by $\overline{\n}_w$. This description of $\ba^w$ implies that
\begin{equation}\label{inc}
\h+\n^w_1\subset \ba^w.
\end{equation}

Since $y^{N-k+1}\in \h+\n^w_1$ the right hand side of (\ref{Qw}) is well defined as an element of $\mathbb{C}[\ba]'$ for $f\in \mathbb{C}[\ba^w]'$. Equivalence (\ref{re1}) also implies that $Q_wf$ is $N_w$--invariant. Finally observe that $\ba\cap \ba^w=\h+\n^w_1$ and that $\ba^w=\h+\n^w_1+\overline{\n}_w$ (direct sum of vector spaces), and hence the image of the map $Q_w$ coincides with $\mathbb{C}[\ba]'^{N_w}$, and the kernel of $Q_w$ is generated by the elements $\widehat{e}_{\alpha}\in \mathbb{C}[\ba^w]'$, $\alpha \in \Delta_w$.

Similarly to the case of extremal projection operators one can obtain for the operators $Q_w$ a multiplicative formula analogous to (\ref{qw}),
$$
Q_wf=Q_{\beta_N}(\ldots (Q_{\beta_k}(f))\ldots )~{\rm (mod}~I^w{\rm )},~f\in \mathbb{C}[\ba^w]',
$$
and operators $Q_\alpha$ are given by a formula similar to (\ref{Pp}),
\begin{equation}\label{Qp}
Q_\alpha f=\sum_{n=0}^\infty \frac{(-1)^n}{n!}\widehat{h}_\alpha^{-n}\widehat{e}_{-\alpha}^n \{\widehat{e}_\alpha, \widetilde{f}\}^n,
\end{equation}
where $\widetilde{f}\in \mathbb{C}[\g]'$ is any representative of $f\in \mathbb{C}[\ba^w]'=\mathbb{C}[\g]'/I^w$ in $\mathbb{C}[\g]'$, and
$$
\{e_\alpha \widetilde{f}\}^n=\underbrace{\{\widehat{e}_\alpha,\{\widehat{e}_\alpha, \ldots,\{\widehat{e}_\alpha,\widetilde{f}\}\ldots\}\}}_{\rm n~times}.
$$
\end{proof}

\end{document}